\newtheorem{theorem}{Theorem}[section]
\newtheorem{lemma}[theorem]{Lemma}
\newtheorem{corollary}[theorem]{Corollary}
\theoremstyle{definition}
\newtheorem{definition}[theorem]{Definition}
\theoremstyle{remark}
\newtheorem{remark}[theorem]{Remark}
\numberwithin{equation}{section}
\newcommand{\R}{\mathbb{R}}
\newcommand{\C}{\mathbb{C}}
\newcommand{\hC}{\widehat{\mathbb{C}}}
\newcommand{\jl}{\mathcal{J}(f)}
\begin{document}

\title{Herman rings of meromorphic maps with an omitted value}

\author{Tarakanta Nayak}
\address{School of Basic Sciences, Indian Institute of Technology Bhubaneswar, India}
\email{tnayak@iitbbs.ac.in}
\thanks{The author is supported by the Department of Science \& Technology, Govt. of India through the
 Fast Track Project (SR/FTP/MS-019/2011).}


\subjclass[2000]{Primary 37F10; Secondary 37F45}

\date{Dec 30, 2014.}


\keywords{Herman ring, omitted value, meromorphic function}

\begin{abstract}
We investigate the existence and distribution of Herman rings of transcendental
meromorphic
 functions which have at least one omitted value. If all the poles of such a function are multiple then
   it has no Herman ring. Herman rings of period one or two do not exist.
Functions with a single pole or with at least two poles one of which is an omitted value have no Herman ring. Every doubly connected
 periodic Fatou component is a Herman ring.
\end{abstract}

\maketitle

%

\section{Introduction}

Unlike rational maps, a transcendental meromorphic map can omit a
point of the Riemann sphere. Such a point is called an omitted value
of the map and  by Picard's theorem, there can be at most two such
values. These values are known to be asymptotic values i.e., those
to which the map $f(z)$ approaches when $z \to \infty$ along some
curve. Not every asymptotic value is omitted.
 \par
 A singular value
  of a transcendental meromorphic map $f$ is either a critical value (the image of a point $z$  for which $f'(z)=0$) or an asymptotic value.
A singular value or a limit point of singular values is known as a singularity of the inverse function $f^{-1}$  because this is a point where at least one branch of $f^{-1}$ fails
 to be defined. Further, there are different possible ways
   in which this failure can take place leading to the following classification of singularities \cite{bergere95}.
   For $a \in \widehat{\C}$ and $r>0$, let $D_r(a)$ be a disk (in the spherical
  metric) and
choose a component $U_r$ of $f^{-1}(D_r(a))$ in such a way that
$U_{r_1}\subset U_{r_2}$ for $0<r_1<r_2$.
 There are two possibilities.
\begin{enumerate}
\item   $\bigcap_{r>0} U_r= \{z\}$ for $z \in \C$:  In this case, $f(z)=a$.
The point $z$ is called an \textsl{ordinary point} if (i)$f'(z)\neq
0$ and $a \in \C$, or (ii)  $z$ is a simple pole. The point $z$ is
called a \textsl{critical point} if $f'(z)=0$  and $a \in \C$, or
$z$ is a
 multiple pole. In this case, $a$ is called a \textit{critical value} and we say that a critical
 point/algebraic singularity lies over $a$.
 \item $\bigcap_{r>0} U_r= \emptyset$: The choice $r \to U_r$ defines
  a   transcendental  singularity of $f^{-1}$.
We say a singularity $U$ lies over $a$. The singularity $U$ lying over $a$
is called direct if there exists $r>0$ such that $f(z) \neq a$ for
all $z \in U_r$. Otherwise it is called indirect.
\end{enumerate}

 Over each asymptotic value, there lies a transcendental singularity and there is always a critical point lying over a critical
   value. It is important to note that an asymptotic value can also
   be a critical value. But an omitted value can neither be a critical
   value nor the image of any ordinary point. Further, each singularity lying over an omitted value is
   direct. In this way, an omitted
  value can be viewed as the simplest instance of a transcendental singularity.
\par
A transcendental meromorphic function  (for which $\infty$ is the only essential singularity) can (1) be entire, (2) be
analytic self-map of the punctured plane i.e., with only one pole
which is an omitted value or (3) have at least two poles or
  exactly one pole which is not an omitted value.  The functions in the last category are usually
   referred as  general meromorphic functions  possibly because the property of being meromorphic has the clearest manifestation,
   at least in dynamical terms in this case.
   Let $M$ denote the class of all general meromorphic maps and $$ M_o =\{f \in M~:~ f ~\mbox{has at least an omitted value} \}.$$
We deal with the functions belonging to the class $M_o$ in this article.
   \par
 For a meromorphic function $f:
\mathbb{C} \rightarrow \widehat{\mathbb{C}}=\mathbb{C} \cup \{
\infty \}$, the set of points $z \in \widehat{\mathbb{C}}$ in a
neighbourhood of which the sequence of iterates
$\{f^n\}_{n=0}^{\infty}$ is defined and forms a normal family is
called the Fatou set of $f$. The Julia set is its complement in
$\widehat{\mathbb{C}}$. The Fatou set is open by definition and each of its
maximal connected subset is known as a Fatou component. A
 Fatou component $U$ is called $p$-periodic if $p$ is the smallest natural number satisfying $f^p(U) \subseteq U$.
 Periodic Fatou components
are of five types, namely Attracting domain, Parabolic domain,
Siegel disk, Herman ring and Baker domain. A Herman ring $H$ with period $p$ is such that there exists
an analytic homeomorphism $\phi:H \to A =\{z:1<|z|<r\}, r>1$ with
$\phi(f^p(\phi^{-1}(z)))=e^{i 2\pi \alpha}z$ for all $z \in A$ and for some $\alpha \in
\R\setminus \mathbb{Q}$. Clearly, there are uncountably many $f^p$-invariant Jordan curves in $H$. Each such curve separates the two components of $\widehat{\C} \setminus H$. By ring, we shall mean Herman ring throughout this article.
   \par
    Transcendental entire functions always
   omit $\infty$ and at most another point in the plane. Analytic
   self-maps of the punctured plane with only one essential singularity omit only one finite value, namely the pole. It is well-known that entire functions and
analytic self-maps of the punctured plane cannot have any Herman
   ring, the proof of the later appearing in ~\cite{gong-1996}. Investigations on the role of omitted values in determining certain aspects of the dynamics
of a function is initiated in~\cite{tkzheng-omitted}.
It is seen, among other things, that in most of the cases a multiply connected Fatou component of a function with at least one omitted value ultimately lands on a Herman ring of period at least $2$.  The current article investigates the existence of the Herman rings of meromorphic functions with at least an omitted value.
\par
There are many different possible arrangements of a $p-$periodic cycle of Herman rings in the plane.
To understand this we make the following definition. For a Herman ring $H$, we denote the Herman ring containing $f^i(H)$ by $H_i$ for each non-negative natural number $i$ throughout this article. Here $H_0$ stands for $H$. Let $B(H)$
denote the bounded component of $\widehat{\mathbb{C}} \setminus H$.
\begin{definition} (\textbf{$H-$maximal nest})\\
Given a Herman ring $H$, a ring $H_j$ is called an $H-$outermost ring if $H_j$ is not contained in $B(H_i)$ for any $i, i \neq j$. Given an $H-$outermost ring $H_j$, the collection of rings consisting of $H_j$ and all $H_i$
such that $H_i \subset B(H_j)$ is called an $H-$maximal nest.
\end{definition}

Note that an $H-$maximal nest is a sub-collection of Herman rings from the periodic cycle containing $H$.
  The number of $H-$maximal nests can be any natural number less than or equal to the period of $H$ and each $H-$maximal nest corresponds to an $H-$outermost ring. One way to broadly classify the possible arrangements of a $p-$periodic cycle of Herman rings may be in terms of the number of maximal nests.
The two extreme arrangements, namely when the number of maximal nests is $1$ or $p$, deserve names of their own.
\begin{definition}(\textbf{Nested, Strictly nested and Strictly non-nested})
A $p-$periodic cycle of Herman rings with $p >1$ is called \textit{nested} if there is a $j$ such that $H_i \subset B(H_j)$ for all $i \neq j$. It is called \textit{strictly nested} if for each $i \neq j$, either $ H_i \subset B(H_j)$ or $H_j \subset B(H_i)$. We say $H$ is \textit{strictly non-nested} if $B(H_i) \bigcap B(H_j) = \emptyset$ for all $i \neq j$. Every Herman ring of a nested or strictly nested cycle is also called nested or strictly non-nested respectively.
\end{definition}
It is clear that a Herman ring $H$ is nested if there is only one $H-$maximal nest. Each Herman ring of period two is either nested or strictly non-nested.
\par
It is shown in~\cite{tkzheng-omitted} that functions belonging to $M_o$ having only one pole have no Herman ring of period $2$. That this is true for functions possibly with more than one pole is one of the implications of the following result.

\begin{theorem}
If $f \in M_o$ then $f$ has no Herman ring which is nested or
strictly non-nested and in particular, it has no Herman ring of
period one or two. Further, if a pole of $f$ is an omitted value then it
has no Herman ring of any period. \label{nested-nonnested HR}
\end{theorem}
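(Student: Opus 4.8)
The plan is to dispose of the two extreme arrangements (nested and strictly non-nested) first and then reduce everything else to them. Throughout I write $H=H_0,H_1,\dots,H_{p-1}$ for the cycle and call a ring that encloses a pole a pole-ring and one enclosing $O_f$ an $O_f$-ring. By Remark~\ref{sb pole enclosed} there is at least one pole-ring, and by Lemma~\ref{even} their number is even, hence at least two; moreover Lemma~\ref{general1} shows that \emph{every} pole-ring has its forward image enclosing $O_f$, so $f$ carries pole-rings to $O_f$-rings.

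For the nested case I would argue purely by counting. If $H$ is nested, its outermost ring $H_j$ encloses every other curve $\gamma_i$, so the unbounded component of $\mathcal{K}(H)$, being the exterior of the single Jordan curve $\gamma_j$, is simply connected; hence there is exactly one $H$-maximal nest and $|N|=1$. Lemma~\ref{CPN} then forces $|C|\le|P|\le 1$. On the other hand, since $p\ge 2$ there is at least one curve inside $\gamma_j$: the component of $\mathcal{K}(H)$ lying just inside $\gamma_j$ is bounded by $\gamma_j$ together with the outermost of the inner curves, so its connectivity is at least $2$, whereas the component inside an innermost curve is simply connected. Thus both $1$ and some integer $\ge 2$ are $H$-permissible, giving $|C|\ge 2$, a contradiction. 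This rules out every nested ring, and in particular every strictly nested ring.

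For the strictly non-nested case I would play the evenness of the pole-ring count against a uniqueness statement. By Lemma~\ref{O_f in innermost}(2) the set $O_f$ lies in a single component of $\mathcal{K}(H)$; if two distinct rings both enclosed $O_f$, their bounded complements would share the points of $O_f$, contradicting $B(H_i)\cap B(H_j)=\emptyset$. Hence at most one ring encloses $O_f$, and since at least one pole-ring exists its image produces exactly one $O_f$-ring. But every pole-ring maps into this single $O_f$-ring, and $f$ permutes the cycle injectively, so there can be at most one pole-ring, contradicting that their number is even and positive.

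Finally I would collect the consequences. A period-one ring must enclose a pole by Remark~\ref{sb pole enclosed}; but running the argument of Lemma~\ref{even} on two invariant curves $\gamma_1\subset B(\gamma_2)$ inside it reverses their nesting after one application of $f$, i.e.\ $\gamma_2\subset B(\gamma_1)$, which is absurd, so no invariant ring exists. A period-two ring is, as observed just after the definition of nested, either nested or strictly non-nested, both already excluded. If some pole $p_0$ of $f$ is an omitted value, then any ring enclosing $O_f\ni p_0$ encloses the pole $p_0$, so by Lemma~\ref{general1} its image again encloses $O_f\ni p_0$; iterating, every ring of the cycle encloses the single point $p_0$, forcing the cycle to be strictly nested, which is impossible. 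The main obstacle, and the step I would set up most carefully, is the nested case: the crux is to recognize that the inequality $|C|\le|N|$ of Lemma~\ref{CPN} collides with the elementary topological fact that a nested family of at least two Jordan curves always yields components of $\mathcal{K}(H)$ of two different connectivities, so that the whole argument is a counting argument rather than a winding-number computation.
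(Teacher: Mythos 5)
Your proposal is correct, and the interesting divergence is in the nested case. The paper disposes of a nested cycle by a dynamical argument: it shows that the forward image of every ``inner'' ring is inner or equal to $H_0$, forms the region $A$ bounded by $\bigcup_{i=0}^{j^*-1}f^{i}(\gamma_0)$, proves $f$ is conformal on $A$ with $f(A)=A$, and then contradicts the fact that $A$ meets the Julia set. You instead run a pure counting argument: nestedness forces the unbounded component of $\mathcal{K}(H)$ to be simply connected, so $|N|=1$, while the component of $\mathcal{K}(H)$ abutting the outermost curve from inside has connectivity at least $2$ and the component inside an innermost curve has connectivity $1$, so $|C|\ge 2$, colliding with $|C|\le|P|\le|N|$ from Lemma~\ref{CPN}. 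This is shorter and avoids the delicate bookkeeping of inner rings, at the cost of invoking the full strength of Lemma~\ref{CPN} (in particular $|P|\le|N|$), whereas the paper's Case~I uses only Lemmas~\ref{general1} and~\ref{one-one}. Your strictly non-nested case is essentially the paper's: disjointness of the $B(H_i)$ allows at most one ring enclosing $O_f$, hence at most one pole-ring, against the even and positive count supplied by Remark~\ref{sb pole enclosed} and Lemma~\ref{even}. For period one the paper simply cites \cite{tkzheng-omitted}; your self-contained substitute (an invariant ring must enclose a pole, and then the nesting of two $f$-invariant curves is reversed by $f$ exactly as in the proof of Lemma~\ref{even}) is sound. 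The remaining reductions --- period two being nested or strictly non-nested, and a pole that is an omitted value forcing every ring of the cycle to enclose that pole and hence a nested configuration --- coincide with the paper's.
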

Theorem~\ref{nested-nonnested HR} gives that if a function $f \in M_o$ has a pole which is also an omitted value then it cannot have any Herman ring. Here we look at the possibility of Herman rings for functions with only a single pole which is not necessarily an omitted value.  An easy-to-verify sufficient condition for non-existence of Herman rings follows.
\begin{theorem}
If $f \in M_o$  has only one  pole then $f$ has no Herman ring.
\label{onepole-no HR}\end{theorem}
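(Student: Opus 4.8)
The plan is to suppose for contradiction that $f \in M_o$ has a single pole $w$ and carries a Herman ring $H$ of period $p$, and then to force $H$ to be nested, which is impossible by Theorem~\ref{nested-nonnested HR}. I would first clear the reductions already set up in this subsection: if $w$ is not simply bounded there is no Herman ring by Remark~\ref{HR encloses a sb pole}, and if $w$ is an omitted value there is none by Theorem~\ref{nested-nonnested HR}. Since period-one (invariant) Herman rings do not occur for functions in $M_o$, I may also assume $p \ge 2$. Thus throughout, $w$ is simply bounded, is not an omitted value, and $p \ge 2$.

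The core of the argument is a single application of Lemma~\ref{CPN}. As $w$ is the only pole of $f$ and some $H_j$ must enclose a pole by Remark~\ref{sb pole enclosed}(2), the pole $w$ is the unique $H$-relevant pole; hence $|P| = 1$. Lemma~\ref{CPN} then gives $|C| \le |P| = 1$. But $\mathcal{K}(H)$ always has a bounded simply connected component, so $1 \in C$ and therefore $C = \{1\}$: every component of $\mathcal{K}(H)$, including the unbounded one, is simply connected.

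Next I would translate this into the nest structure. The unbounded component $U$ of $\mathcal{K}(H)$ is $|N|$-connected, so its connectivity $|N|$ is itself an $H$-permissible number, i.e.\ $|N| \in C = \{1\}$; thus $|N| = 1$. Because $U$ is simply connected, its boundary is a single one of the invariant curves, say $\gamma_{j_0}$, and every remaining curve $\gamma_i$ with $i \ne j_0$ lies in $B(\gamma_{j_0})$; since the rings are pairwise disjoint this forces $H_i \subset B(H_{j_0})$ for all $i \ne j_0$. That is exactly the definition of a nested Herman ring, contradicting Theorem~\ref{nested-nonnested HR}, and the theorem follows.

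The step I expect to need the most care is the topological passage from $|N| = 1$ to nestedness: one must justify rigorously that simple connectivity of $U$ makes its boundary exactly one curve $\gamma_{j_0}$ and that all other rings are interior to $H_{j_0}$, which is precisely where the equivalence ``$U$ is $n$-connected if and only if there are $n$ maximal nests'' does the real work. By contrast, the arithmetic — $|P| = 1 \Rightarrow |C| = 1 \Rightarrow C = \{1\}$, together with $|N| \in C$ — is routine bookkeeping resting on the fact that $1$ is always $H$-permissible. It is worth noting that this route bypasses Lemmas~\ref{maximalring} and \ref{2 max nest-equal rings} entirely; those finer statements about the outermost ring and about two strictly nested nests would be needed only if one insisted on ruling out $|N| \ge 2$ by a direct case analysis rather than by the permissibility count above.
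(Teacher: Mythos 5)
Your argument is correct modulo one delicate point, and it is a genuinely different route from the paper's. The paper does not argue through connectivity counting at all: it enumerates the rings $H_{i_1},\dots,H_{i_n}$ enclosing the unique pole $w$, uses Lemma~\ref{maximalring} to place their forward images in a maximal nest disjoint from the one around $w$, deduces $p=2(m+1)$ and that $f^{m+1}$ maps the region between two invariant curves in $H_{i_1}$ and $H_{i_n}$ conformally onto itself (impossible for $n>1$ since that region meets $\mathcal{J}(f)$), and finally kills $n=1$ with Lemma~\ref{even}. You instead get $|P|=1$, hence $|C|\le 1$ by Lemma~\ref{CPN}, hence $C=\{1\}$, hence $|N|=1$, hence $H$ is nested, contradicting Theorem~\ref{nested-nonnested HR}. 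Your route is shorter and bypasses Lemmas~\ref{maximalring}, \ref{2 max nest-equal rings} and \ref{even} entirely; what it buys is a purely combinatorial reduction to the nested case, at the cost of leaning the entire weight of the proof on Lemma~\ref{CPN} and on the (already hard) nested case of Theorem~\ref{nested-nonnested HR}.

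The delicate point is your step ``$|N|\in C$'': the witness for $H$-permissibility of $|N|$ is the \emph{unbounded} component of $\mathcal{K}(H)$. The statement of Lemma~\ref{CPN} does not exclude it, but its proof produces the associated $H$-relevant pole by pushing a component of $\mathcal{K}$ forward until it contains a pole and arguing that the image is again a component of $\mathcal{K}$; that construction is really written for bounded components (the unbounded one contains the essential singularity, and a pole lying in it would not be $H$-relevant). You can sidestep this entirely while keeping your counting argument: $C=\{1\}$ already forces every \emph{bounded} component of $\mathcal{K}$ to be simply connected, so no ring of the cycle can enclose another ring of the cycle (a nesting of two invariant curves creates a bounded component of $\mathcal{K}$ of connectivity at least $2$). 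In particular at most one ring encloses $w$, whereas Lemma~\ref{general1} and Lemma~\ref{even} force the number of pole-enclosing rings to be a positive even number; alternatively, ``no ring encloses another'' makes the cycle strictly non-nested, which Theorem~\ref{nested-nonnested HR} also forbids. With that repair your proof stands as a clean alternative to the one in the paper.
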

For each entire map $g$ and non-zero complex number $z_0$, the map
$f(z)= \frac{e^{g(z)}}{(z-z_0)^k}$ is meromorphic with only a single
pole $z_0$ which is different from its omitted value where $k$ is a natural number.
 By the above theorem, it has no Herman ring.
\par
To deal with Herman rings $H$ with exactly two $H-$maximal nests, we first make a definition.
\begin{definition}(\textbf{Strictly nested $H-$maximal nest})
An $H-$maximal nest $N= \{H_{i_1},~H_{i_2},~H_{i_3,...,~H_{i_n}}\}$ is called strictly nested if for all
$1 \leq j \neq k  \leq n$, either $H_{i_j} \subset B(H_{i_k})$ or $H_{i_k} \subset B(H_{i_j})$.
\end{definition}
Every strictly nested $H-$maximal nest has a unique innermost ring.
We present a result showing how a particular arrangement of rings can force the period of the ring to be odd.
\begin{theorem} For a Herman ring $H$ of a function belonging to $M_o$, if there are only two $H$-maximal nests, one of
which consists of only one ring and the other is strictly nested,
then $H$ is odd periodic.
\label{oddperiodic}\end{theorem}
For a $3-$periodic Herman ring $H$, it follows from Theorem~\ref{nested-nonnested HR} that there are two $H-$maximal nests. It is obvious that one of these is strictly nested and the other consists of only one ring. Thus the converse of the above result is true for $3-$periodic Herman rings.
\par
Herman ring is a periodic doubly connected Fatou component. But
  the converse is not at all obvious.
It is an open question that whether a doubly connected periodic
Fatou component of a meromorphic function (with a single essential singularity) is always a Herman ring~\cite{bolsch99}. This is known to be true when period is one~\cite{bk4}.
We settle this question for all periods and for all maps belonging to $M_o$.
\begin{theorem} For every $f \in M_o$, each doubly connected periodic Fatou component is a Herman ring.
\label{open}
\end{theorem}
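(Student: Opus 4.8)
The plan is to reduce everything to the classification of multiply connected components in Lemma~\ref{MultconnFC}, after recording one general principle about periodic cycles. Let $U$ be the given doubly connected ($c(U)=2$) periodic Fatou component, of period $p$. Being non--simply connected, $U$ cannot be a Siegel disk, so the task is to exclude the attracting, parabolic and Baker alternatives. I would first argue that \emph{if some forward iterate $U_{\bar n}$ is a Herman ring, then $U$ itself is one}: $U_{\bar n}$ then lies in the periodic cycle of $U$, its first return map $f^p|_{U_{\bar n}}$ is conjugate to an irrational rotation and hence injective, so writing $f^p$ as the composition of the bonding maps $f\colon U_j\to U_{j+1}$ once around the cycle forces the product of their degrees to be $1$; each bonding map is therefore a conformal isomorphism, $f^{\bar n}\colon U\to U_{\bar n}$ is conformal, and it conjugates $f^p|_U$ to the rotation $f^p|_{U_{\bar n}}$. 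Thus it suffices, in each case below, to show that $U$ is SCH: since $U$ is multiply connected, SCH produces a Herman ring $U_{\bar n}$ in its cycle, and the principle above finishes the argument.

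I would then run through the alternatives of Lemma~\ref{MultconnFC}. In the generic form of part~(1), and in the ``otherwise'' alternative of part~(2), every Fatou component is SCH, so we are done by the reduction; in the first alternative of part~(2) all components are simply connected, so no doubly connected component exists and there is nothing to prove. When $O_f$ lies in a single component $U^{*}$ (part~(3)) I would split according to whether $U$ lands on $U^{*}$. If it does not, part~(3)(a) makes $U$ simply connected when $U^{*}$ is unbounded, contradicting $c(U)=2$, while part~(3)(b) makes $U$ SCH when $U^{*}$ is bounded, finishing the case. If $U$ does land on $U^{*}$, then $U^{*}$ is periodic and lies in the cycle of $U$, so part~(3)(e) forces every member of that cycle, $U$ included, to have connectivity $1$ or $\infty$, again contradicting $c(U)=2$. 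Finally, in part~(5) a multiply connected component not landing on a Herman ring is wandering, so the periodic $U$ must land on a Herman ring and the reduction applies.

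The genuinely delicate case is part~(4), where $f\in M_o^{1}$ has its omitted value $a$ in a singleton, non--buried Julia component and there is an \emph{infinitely connected} Baker domain $B$ of period $p>1$ absorbing every multiply connected component that does not land on a Herman ring. If $U$ lands on a Herman ring the reduction again applies; the obstacle is to exclude the remaining possibility $U_n=B$, in which $U$ would be a doubly connected member of the periodic cycle of the infinitely connected $B$. I expect this to be the crux. My plan is to propagate connectivity around the cycle via the Riemann--Hurwitz relation $2-c(U_j)=d_j\,(2-c(U_{j+1}))-\delta_j$: a component mapping, properly and with finite degree, onto an infinitely connected one must itself be infinitely connected, so from $c(B)=\infty$ one would obtain $c(U_j)=\infty$ all around the cycle and contradict $c(U)=2$. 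The subtle point --- and the step I expect to need the structure theory of \cite{tkzheng-omitted} underlying Lemma~\ref{MultconnFC} --- is the \emph{properness} of the bonding maps near $B$, since the essential singularity at $\infty$ can obstruct finiteness of the degree; showing that connectivity is forced into $\{1,\infty\}$ throughout the cycle of $B$ is exactly what rules out a doubly connected non--Herman periodic component and completes the proof.
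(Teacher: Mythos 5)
Your proposal follows essentially the same route as the paper: both reduce the theorem to the SCH classification of Lemma~\ref{MultconnFC} and run through its cases in the same way (parts (1), (2), (5) give SCH or simple connectivity outright; part (3) splits on whether $U$ lands on the component containing $O_f$, with (3)(e) excluding the periodic case via connectivity $1$ or $\infty$). Your explicit degree argument showing that a periodic component landing on a Herman ring is itself a Herman ring is a point the paper leaves implicit, and it is a correct and worthwhile addition. The one step you leave as a plan --- excluding a doubly connected member of the cycle of the infinitely connected Baker domain $B$ in case (4) --- is exactly where the paper does not argue via Riemann--Hurwitz at all, but instead cites Lemma~4 and the proof of Theorem~4 of \cite{tkzheng-omitted} for the fact that every forward image of $B$ is infinitely connected; so you have correctly located the crux, and the properness issue you flag is precisely why that step is outsourced to the reference rather than proved by propagating connectivity around the cycle.
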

 Using  quasi-conformal maps,  Fagella et al.  investigated
   Herman rings of transcendental maps and proved that a general meromorphic function having $n$ poles cannot have more than $n$ invariant Herman rings~\cite{fagella-peter 2012}.
   The same tool is exploited by Zheng for showing that a function of finite type has at most finitely many Herman rings~\cite{zheng-2000}. Zheng's arguments are very different from the ones used in ~\cite{fagella-peter 2012}. He uses quasi-conformal deformation of the function which is similar to Sullivan's proof of the absence of wandering domains for rational functions. Though we establish similar restrictions on Herman rings for functions belonging to $M_o$, our approach does not use quasi-conformal maps and is mostly elementary.
   \par
In Section 2, a number of lemmas are proved that are required for the proofs later.
Also an easy corollary establishing non-existence of Herman rings whenever all poles are multiple is proved in this section.  We provide the proofs of all the results in Section 3.
\par
 For a closed curve $\gamma$ in $\C$, let $B(\gamma)$ denote
the union of all the bounded components of $\widehat{\C} \setminus
\gamma$. For a doubly connected domain $H$, $B(H)$ means the
bounded component of $\widehat{\C} \setminus H$ and $\widetilde{H}$
denotes $H \bigcup B(H)$. We say a closed curve (or a Herman ring $H$) surrounds a point $a$ if $a \in B(\gamma)$ (or $a \in B(H)$). The boundary and the closure of a domain
$D$ in $\hC$ is denoted by $\partial D$ and $\overline{D}$
respectively. A maximal connected subset of the Julia set is called a Julia component.
Denote the component of the Julia set $\jl$ containing a set $A$ by
   $J_A$. Also $O_f$ stands for the set of all omitted values of $f$. For a Fatou component $V$ of $f$, we denote the Fatou component containing $f^n(V)$ by $V_n$ for $n=0,1,2,...$ where $f^0$ denotes the identity map.
   \section{Preliminary lemmas}
A non-contractible Jordan curve in the Fatou set of $f \in M$ eventually
(under forward iteration)
   surrounds a pole and in the next iteration, all the omitted values whenever such values exist.
This elementary but useful fact is already proved in
   \cite{tkzheng-omitted}. But we state and prove it here for completeness.
\begin{lemma} Let $f \in M$ and $V$ be a multiply
connected  Fatou component of $f$. Suppose that
$\gamma$ is a closed curve in $V$ with $B(\gamma) \bigcap
\mathcal{J}(f) \neq \emptyset$. Then there is an $n \in \mathbb{N}
\bigcup \{0\}$ and a closed curve $\gamma_n \subseteq f^n(\gamma)$
in $V_n$ such that $B(\gamma_n)$ contains a pole of $f$.
  Further, if $O_f \neq \emptyset$ then $O_f \subset B(\gamma_{n+1})$
  for some closed curve $\gamma_{n+1}$ contained in $f(\gamma_n)$.
\label{general1}
\end{lemma}
\begin{proof}
 Since $f \in M$ and $B(\gamma)\bigcap  \mathcal{J}(f) \neq
\emptyset$, there exists a $z \in B(\gamma)$ satisfying
$f^k(z)=\infty$ for some $k \in \mathbb{N}$. The set $\mathcal{N}=\{
m \in \mathbb{N}~:~ f^{m}(z)=\infty~\mbox{for some }~ z \in
B(\gamma) \}$ is a non-empty subset of $\mathbb{N}$ and  the Well-Ordering Property
 of natural numbers gives that $\mathcal{N}$ has a smallest element. Let it be
$\tilde{n}$ and set $n=\tilde{n}-1$. Then $n \in  \mathbb{N} \bigcup
\{0\}$ and $f^{n}~:~B(\gamma)   \to
 \mathbb{C}$   is analytic.   Hence,  $\gamma_n = \partial (f^{n}(B(\gamma)))$
 is a  closed curve contained in $V_n$ with
$\gamma_n \subseteq f^{n}(\gamma)$ and $B(\gamma_{n})$ contains a
pole of $f$.
\par
Suppose that the closure of $f(B(\gamma_n))$ contains an  element
$a$ of $O_f$. Let $\{w_k\}_{k>0}$ be a sequence in $f(B(\gamma_n))$
converging to $a$ and for each $k$, let $z_k$ be a point  in
$B(\gamma_n)$ satisfying $f(z_k)=w_k$. Then, considering a limit
point $z_0$ of $\{z_{k}\}_{k>0}$ we observe that $z_0 \in
\overline{B(\gamma_n)}$. The continuity of $f$ at $z_0$ gives that
$f(z_0)=a$. This is a contradiction since $a$ is an omitted value.
Therefore, $O_f \subset \widehat{\mathbb{C}} \setminus
\overline{f(B(\gamma_n))}$. The set $\overline{f(B(\gamma_n))}$ is
connected and contains a neighborhood of $\infty$. Consequently, $
\widehat{\mathbb{C}} \setminus \overline{f(B(\gamma_n))}$ is a
non-empty open set whose boundary is contained in $f(\gamma_{n})$
and there is a closed (and bounded but not necessarily simple) curve
$\gamma_{n+1} \subseteq f(\gamma_n)$ such that
 $O_f \subset B(\gamma_{n+1})$.
\end{proof}
\begin{remark}
The  proof of the above lemma also gives that $O_f \bigcap
\overline{f(B)} =\emptyset$ for every bounded domain $B$.
\label{sb pole enclosed}
\end{remark}
 If $H$ is a Herman ring of $f \in M_o$ then some $H_j$ surrounds a pole and its forward image $H_{j+1}$
 surrounds $O_f$. Consequently, the Julia component containing such a pole or an omitted value is always bounded. Some other properties are necessary for a pole so that it can be surrounded by some Herman ring of functions belonging to $M_o$.
\begin{definition}(\textbf{Single separated})\\
A simple pole of a function is called  \textit{single separated} if the component of the Julia set containing it,
is bounded and does not contain any other pole of the function.
\end{definition}

\begin{lemma} If  $H$ is a Herman ring of $f \in M_o$ then $f: B(H) \to \widehat{\C}$ is one-one.
\label{one-one}
\end{lemma}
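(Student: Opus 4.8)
The plan is to run the argument principle on the bounded Jordan domain cut out by a core curve of $H$, using the omitted value to force the local degree of $f$ to be one.

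First I would choose an $f^p$-invariant Jordan curve $\gamma\subset H$, where $p$ is the period of $H$; as noted in the introduction such curves exist and each separates the two components of $\hC\setminus H$, so that $B(H)\subset B(\gamma)$. It is standard that $f^p$ restricted to a Herman ring is conformally conjugate to an irrational rotation of an annulus and hence has degree one. Writing $f^p|_H$ as the composition of the maps $f\colon H_i\to H_{i+1}$ around the cycle $H=H_0,H_1,\dots,H_p=H_0$, each of which is a proper holomorphic surjection of degree at least one, the product of the degrees equals one; therefore every factor, in particular $f\colon H\to H_1$, is a conformal isomorphism. Consequently $f$ maps $\gamma$ homeomorphically onto a Jordan curve $f(\gamma)\subset H_1\subset\C$.

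Next I would apply the argument principle to $f$ on $B(\gamma)$. Since $\gamma$ lies in the Fatou set with $f(\gamma)\subset H_1\subset\C$, the function $f$ has no pole on $\gamma$, while $B(\gamma)$, being bounded, contains only finitely many poles of $f$, say $P$ counted with multiplicity. For every $w\notin f(\gamma)$ the number $Z(w)$ of solutions of $f(z)=w$ in $B(\gamma)$, counted with multiplicity, then satisfies $Z(w)-P=n(f(\gamma),w)$, the winding number of $f(\gamma)$ about $w$. As $f(\gamma)$ is a Jordan curve, this winding number equals a constant $n_0\in\{+1,-1\}$ on the bounded complementary component and $0$ on the unbounded one, so $Z$ assumes only the two nonnegative values $P$ and $P+n_0$.

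The decisive input is the omitted value $a\in O_f$, which is finite (as $f$ has poles) and, being omitted, satisfies $Z(a)=0$ with $a\notin f(\gamma)$. Hence $0\in\{P,P+n_0\}$; two nonnegative integers differing by one, one of which is $0$, must be $0$ and $1$, so $Z(w)\le 1$ everywhere. By the usual boundary-correspondence argument, $f$ therefore maps $B(\gamma)$ biholomorphically onto the complementary component of $f(\gamma)$ determined by the sign of $n_0$, and in particular $f|_{B(\gamma)}$ is injective; restricting to $B(H)\subset B(\gamma)$ finishes the proof. I expect the main obstacle to be the orientation bookkeeping concealed in $n_0$: a conformal isomorphism of annuli may interchange the two boundary circles, so $f$ can carry $B(\gamma)$ onto either complementary component of $f(\gamma)$. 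The case $n_0=-1$ is precisely the one in which $B(\gamma)$ swallows the pole guaranteed by the second part of Remark~\ref{sb pole enclosed} (that pole mapping to $\infty$ in the unbounded component), while $n_0=+1$ forces $P=0$. The uniform count above bypasses this dichotomy, but I would still want to verify carefully that $f$ has neither zeros of $f-a$ nor poles on $\gamma$ itself, and that the passage from degree one to univalence is legitimate in the meromorphic setting, where the target component of $\hC\setminus f(\gamma)$ may contain $\infty$.
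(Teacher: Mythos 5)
Your proof is correct and follows essentially the same route as the paper's: an $f^p$-invariant core curve $\gamma$, injectivity of $f$ on $H$ making $f(\gamma)$ a Jordan curve, and the argument principle on $B(\gamma)$ with the omitted value forcing the valence to be at most one. The only cosmetic difference is that the paper splits into the cases ``$f$ analytic on $B(\gamma)$'' and ``$f$ has a pole in $B(\gamma)$'' (invoking Lemma~\ref{general1} to place an omitted value inside $f(\gamma)$ in the latter case), whereas you treat both at once by observing that $Z(a)=0$ no matter which complementary component of $f(\gamma)$ contains $a$.
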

\begin{proof}
Let $H$ be a $p$-periodic Herman ring and $\gamma$ be an
$f^p$-invariant Jordan curve in it. Since $f: H \to \C$ is one-one
by the definition of Herman ring, $f(\gamma)$ is a Jordan curve winding around every point of $B(H_1)$ exactly once.
 \par Assume $f: B(H) \to
\widehat{\C}$ is not one-one which implies that $f: B(\gamma) \to \widehat{\C}$ is
not one-one.
\par
Let $f: B(\gamma) \to \widehat{\C}$ be analytic. Then there are at
least two points in $B(H)$ with the same image and by the Argument
principle, the curve $f(\gamma)$ winds around every point of $B(H_1)$ at
least twice. If $f: B(\gamma) \to \widehat{\C}$ has at least a pole
then Lemma~\ref{general1} ensures that $f(\gamma)$ surrounds an
omitted value $a$ of $f$. It follows from the assumption that $f$ has at least two poles or one pole with multiplicity at least two in $B(\gamma)$. Applying the Argument principle to $g(z)=
f(z)-a$ on $B(\gamma)$, we find $g(\gamma)$ winding around $0$ at
least twice implying that $f(\gamma)$  winds around $a$  at least
twice.  This is a contradiction which completes the proof.
\end{proof}
\begin{remark}
The lemma above in fact gives that if $f \in M_o$ then every pole surrounded by a Herman ring
is single separated and a Herman ring can surround at most one pole. Together with Lemma~\ref{general1}, it gives the following as a corollary.
\label{HR encloses a sb pole}\end{remark}
\begin{corollary}
If all the poles of a function belonging to $M_o$ are multiple then it has no Herman ring.
\end{corollary}

Now, the number of rings in a periodic cycle of Herman rings that
 surround some pole is to be determined. Recall that, for a Herman ring $H$,
  the Fatou component containing $f^i (H)$ is denoted by $H_i$.
\begin{lemma}
For every $p$-periodic Herman ring $H$ of $f \in M_o$, the number of elements in the set $\{0 \leq i \leq p~:~H_i ~\mbox{surrounds a pole}\}$  is even.
 \label{even}\end{lemma}
\begin{proof}
Let $H_j$ be a $p$-periodic Herman ring
and $\gamma_1, \gamma_2$ be two $f^p$-invariant Jordan curves in
$H_j$ such that $\gamma_1 \subset B(\gamma_2)$. If $H_j$ surrounds a pole then $f(\gamma_2) \subset B(f(\gamma_1))$. Otherwise
$f(\gamma_1) \subset B(f(\gamma_2))$. This is true for every $j \geq 0$ by Lemma~\ref{general1} and ~\ref{one-one}. If the number of
Herman rings in this cycle that surrounds some pole is odd then
$f^p(\gamma_1)$ surrounds $f^{p}(\gamma_2)$ which is a contradiction
as $f^p(\gamma_i)=\gamma_i$ for $i =1,2$.
\end{proof}
Given a Herman ring $H$ of period $p>1$, let $\gamma$ be an
$f^p$-invariant Jordan curve in it and set $\gamma_j=f^j (\gamma)$
for $j=0,1,2,...,p-1$.
 Each $\gamma_j$ is bounded and the arrangement of $\{\gamma=\gamma_0, ~\gamma_1,~\gamma_2,~...,~\gamma_{p-1}\}$ in the plane is important for the purpose of this article.\\
Define $$\mathcal{K}(\gamma)=\widehat{\C} \setminus \bigcup_{j=0}^{p-1}
\gamma_j.$$
The unbounded component of $\mathcal{K}(\gamma)$ is $n-$connected if and only if there are $n$ $H-$maximal nests.  
Note that $\mathcal{K}(\gamma)$ has exactly one unbounded component and
  at least one simply connected bounded component. Further, each of its components  intersects the Julia set.
 It is important to note that all the above observations are true for every $f^p$-invariant Jordan curve contained in a $p$-periodic Herman ring. Therefore, $\mathcal{K}$ is used instead of $\mathcal{K}(H)$ whenever the Herman ring containing $\gamma$ is understood.
 \par
  Given a Herman ring $H$, we first make following definitions.
\begin{definition}(\textbf{$H$-relevant pole})\\
A pole $w$ is called $H$-relevant if some $H_j$ surrounds $w$.
\end{definition}
$H$-relevant poles are always single separated by Remark~\ref{HR
encloses a sb pole}.
\begin{definition}(\textbf{Innermost ring}) A ring $H_i$ in an $H-$maximal nest $N$ is called innermost if it does not surround $H_j$ for any $j \neq i$.
\end{definition}
Each $H-$maximal nest has at least one innermost ring and exactly one outermost ring. Further, each $H_i$ belongs to exactly one $H-$ maximal nest.
\par
We are now to relate $H-$relevant poles and $H-$maximal nests.
\begin{lemma}
Let $H$ be a Herman ring of a function $f \in M_o$. Then the number of $H-$relevant poles is less than or equal to the number of $H-$maximal nests.
\label{relevantpoles<maximalnests}
\end{lemma}
\begin{proof}
No two $H-$relevant poles can be surrounded by a Herman ring by Remark~\ref{HR encloses a sb pole}. The proof follows from the fact that each $H-$relevant pole is in a single $H-$maximal nest.
\end{proof}
Every component of the pre-image of sufficiently small
neighbourhood of an omitted value is unbounded and on each of these
components, the map is not one-one. Now, it follows from the
definition (of Herman ring) that a Herman ring does not contain
any omitted value. Each periodic cycle of Herman rings of a function belonging to $M_o$
 contains a ring  surrounding the set of all omitted values as well as a ring surrounding a pole by Lemma~\ref{general1}. These two rings may be the same.
 Since invariant Herman rings do not exist for these functions
~\cite{tkzheng-omitted}, locating the  omitted
values and the poles in different components of $\mathcal{K}$ for a Herman ring $H$ requires some work.
\par
 Given a $p$-periodic Herman ring $H=H_0$ and a non-empty set $A$ contained in a bounded component of $\mathcal{K}$  such that $A \bigcap (\bigcup_{i=0}^{p-1}H_i) =\emptyset$, we say $H_j$ is the innermost ring with respect to $A$ if there is no $H_k, k \neq j$ surrounding $A$ and surrounded by $H_j$.

\begin{lemma} Let $H$ be a $p$-periodic Herman ring of  $f \in M_o$. Then we have the following.
\begin{enumerate}
\item The outermost ring of each  $H$-maximal nest surrounds at most one
pole.
\item The set $O_f$ is contained in a single bounded component of
$\mathcal{K}$.
\item Either the innermost ring $H(in)$ with respect to $O_f$
does not surround any pole or is an innermost ring of the
$H$-maximal nest containing it.
\item If only one $H$-maximal nest $N$ has
more than one ring in it then an innermost
 ring of $N$ surrounds $O_f$.
 \end{enumerate}
\label{O_f in innermost}\end{lemma}
\begin{proof}
\begin{enumerate}
\item
That the outermost ring of each  $H$-maximal nest surrounds at most one pole follows from Remark~\ref{HR encloses a sb
pole}.
\item   We assume $|O_f|=2$ otherwise there
is nothing to prove. Suppose that  $O_f$ intersects two distinct components of $\mathcal{K}$. Then
   there is a Herman ring, say $H_j$ surrounding exactly one omitted value of $f$.
If $H_{j-1}$ is the  Herman ring such that $f(H_{j-1})=H_j$ then consider an $f^p$-invariant Jordan curve $\gamma$ in $H_{j-1}$. It now follows from Remark~\ref{sb pole enclosed} that $f$ is not analytic on $B(\gamma)$ and hence not on $B(H_{j-1})$.
Therefore $B(H_{j-1})$ contains a pole  and hence $H_j$  surrounds
both the omitted values by Lemma~\ref{general1}. This contradicts
our assumption proving that $O_f$ is contained in a single component
of $\mathcal{K}$.
\item
If the innermost ring $H(in)$ with respect to $O_f$ surrounds a pole then $f^n(H(in))$
surrounds or is equal to $H(in)$ for all $n$ by Lemma~\ref{general1}.
That means $H(in)$ is the innermost ring of the $H$-maximal nest
containing it.

\item By Lemma~\ref{even}, there are at least two distinct rings $H_i$ and $H_j$ each surrounding some pole of $f$. It follows from Lemma~\ref{general1} that  $H_{i+1}$ and $H_{j+1}$ are distinct and each surrounds the set $O_f$, of all omitted values of $f$. Since $N$ is the only $H$-maximal nest having
more than one ring, these rings  $H_{i+1}$ and $H_{j+1}$ must be in $N$. In particular, there is a ring in $N$ surrounding $O_f$.
\par
 If $O_f$ is not surrounded by an innermost ring of
$N$ then the innermost ring with respect to $O_f$, say $H_k$ is different from all the innermost rings of $N$. Further $H_k$ surrounds a ring $H_{k'}$ of $N$. By $(3)$ of this lemma, $H_k$ surrounds a pole.
But in this situation, $f^n(H_k)$ is different from $H_{k'}$ for all $n$ by Lemma~\ref{general1} which is a contradiction.
Thus, all the omitted values are surrounded by an
innermost ring of $N$.
\end{enumerate}\end{proof}
%

\section{Proofs of the Results}

 \subsection{ Arrangement of Herman rings}
\begin{proof}[Proof of Theorem~\ref{nested-nonnested HR}]
Let  $H_0$ be a $p$-periodic Herman ring.
A function belonging to $M_o$ has no invariant Herman ring.
The proof appeared in ~\cite{tkzheng-omitted}. But we present it here for the sake of completeness. Let $H$ be an invariant Herman ring of $f$
 Taking a Jordan curve $\gamma$ in $H$ such that $f(\gamma)=\gamma$, it is seen that $B(\gamma)$ contains a pole and all the omitted values by Lemma~\ref{general1}. Since $f(B(\gamma))$ contains a neighborhood of $\infty$, we have $f(B(\gamma))=\widehat{\mathbb{C}} \setminus B(\gamma)$. However, this is not possible as $B(\gamma) \bigcap H$ contains $f-$invariant Jordan curves.
  \par
  We assume $p>1$ and then the proof will follow by deriving contradictions in each of the following cases.\\
\textbf{Case I: $H$ is nested}\\
 Without loss of generality suppose that  $H_i \subset
B(H_0)$ for all $i > 0$. Then by Lemma~\ref{general1}, there is  a pole of $f$ in $B(H_0)$ and all the omitted values are in $B(H_1)$.
 Further, the set $f(B(H_0))$ is the unbounded component of $\hC \setminus H_1$ by Lemma~\ref{one-one}. Since $\bigcup_{i=1}^{p-1} H_i \subset B(H_0)$,
 we must have $\bigcup_{i=2}^{p-1} H_i \bigcup H_0 \subset f(B(H_0))=\widehat{\C} \setminus (H_1 \bigcup B(H_1))$. Then $H_1$ is inner in the sense that $B(H_1)$ does not contain  $H_{j}$ for any $ j \geq 0$.
If $B(H_1)$ contains a component of $f^{-1}(H_j)$ for some $j$ then this component is not periodic and is in $B(H_0)$ (since $H_1 \subset B(H_0)$).
Also, there is a periodic Herman
ring mapped onto $H_j$ by $f$ which is either $H_0$ or in $B(H_0)$. Both these periodic and non-periodic components are mapped
onto $H_j$ and are in $B(H_0)$ giving that $f~:~{B(H_0)} \to \hC$ is
not one-one: a contradiction to Lemma~\ref{one-one}. Thus $B(H_1)$
does not contain any component of $f^{-1}(H_j)$ for any $j$ and therefore
$f(B(H_1))$ contains no $H_j$ for any $~j \geq 0$. Now, if $f$ has a
pole in $B(H_1)$ then $H_2 = H_0$. Otherwise, that means if $f$ is
analytic in $B(H_1)$ then $H_2$ is inner. Repeating this argument
and noting that $B(H_1)$ intersects the Julia set and hence,
contains pre-images of poles,
 the smallest natural number $j^*$ can be found such that $H_j$ is inner for all $j, 0< j< j^*$, $H_{j*-1}$ surrounds a pole and  $H_{j^*}=H_0$.
 This means that the forward image of every inner ring is inner or $H_0$ and that of the $H_0$ is inner. Now, take an $f^p$-invariant Jordan curve $\gamma_0$ in $H_0$
  and consider the region $A$ bounded by $\gamma=\bigcup_{i=0}^{j^*-1} f^{i}(\gamma_0)$. Any pole in $A$ would violate the univalence
  of $f$ in $B(H_0)$, since there is a pole in $B(H_{j^* -1}) \subset B(H_0)$. Thus $f$ is conformal in $A$ by Lemma~\ref{one-one}.
  Further $f(A)=A$ because $f(\partial A)=f(\gamma)=\gamma$ which gives that $f^n(A)=A$ for all $n$.
   But this is not possible as $A$ intersects the Julia set.\\
\textbf{Case II: $H$ is strictly non-nested}\\
 There are at least two rings, say $H_i$ and $H_j$ surrounding some pole of $f$ by Lemma~\ref{even}. Consequently, there are two Herman rings $H_{i+1}$ and $H_{j+1}$ surrounding $O_f$. But this is not possible as $H_0$ is strictly non-nested.
\par
 If there is a Herman ring of period two then it is nested or strictly non-nested.
Similarly, if a pole of $f$ is an omitted value then the forward
images of each ring, surrounding this pole, also surrounds the
pole. That means the Herman ring is nested. But these kinds of
Herman rings are not possible. This completes the proof.
\end{proof}
\subsection{Functions with at most two poles}
In order to prove Theorem~\ref{onepole-no HR}, we first prove a lemma.
\par
As discussed in the proof of Theorem~\ref{nested-nonnested HR}, a Herman  ring may have many pre-images out of which only one will be
periodic and we refer it as periodic pre-image of the ring.
For a Herman ring $H$ and an $H-$relevant pole $w$, let $H_j$ be a Herman ring surrounding $w$.
Then the $H-$outermost ring of the $H-$maximal nest containing $H_j$ is called the outermost ring with respect to $w$.

\begin{lemma}
Let $f \in M_o$  have only one single separated  pole $w$. If $H_m$ is
the outermost ring with respect to $w$ then $B(H_m) \bigcap O_f =
\emptyset$ and $H_m$ does not surround any pre-image of $w$.
\label{maximalring}
\end{lemma}
\begin{proof}
If $B(H_m) \bigcap O_f \neq \emptyset$ then the periodic pre-image
$H_{m-1}$ of $H_m$ must surround  a pole by Remark~\ref{sb pole
enclosed}. This pole is $w$ as this is the only single separated pole of $f$.
 Note that $O_f$ is surrounded by the innermost ring with
respect to $w$ by Lemma~\ref{O_f in innermost}(3). Since the periodic
pre-image $H_{m-1}$ of $H_m$ surrounds $w$, $H_{m-1}$ also surrounds
$O_f$. Similarly, it follows that the periodic pre-image $H_{m-2}$
of $H_{m-1}$ surrounds $w$ as well as $O_f$.
 By repeating this process for finitely many times, it is seen that $H_{j}$ surrounds $w$ for all $j \geq 0$.
  In other words, $H_m$ is strictly nested which is not possible by Theorem~\ref{nested-nonnested HR}.
\par
If $H_m$ surrounds a pre-image $w_{-1}$ of $w$ then $H_{m+1}$
surrounds $w$ as well as $O_f$ and therefore $B(H_m) \bigcap O_f \neq
\emptyset$. This is already shown to be impossible.
\end{proof}
\begin{remark} The lemma above implies that both the forward image and the
periodic pre-image of any $H_{m}$ are in nests different from the one
surrounding $w$.
\end{remark}
%
\begin{proof}[Proof of Theorem~\ref{onepole-no HR}]
Suppose that $H$ is a $p$-periodic Herman ring of $f$.
 Then some $H_i$ surrounds a pole, say $w$ by Lemma~\ref{general1}. Let all such Herman rings be enumerated as $\{H_{i_k} \}_{k=1}^{n}$. Also, let $H_{i_1}$ be the innermost and $H_{i_n}$ be the outermost  ring with respect to $w$. Note that each $H_{i_k +1}$ surrounds $O_f$ by Lemma~\ref{general1}. By Lemma~\ref{maximalring}, none of the omitted values lie in $B(H_{i_n})$. Therefore, each of  $\{H_{i_k +1} \}_{k=1}^{n}$ is in an $H-$maximal nest not containing any $H_{i_k}, k=1,~2,~...,~n$. Let $m$ be the smallest natural number such that $H_{{i_n}+1+m}$, the Herman ring containing $f^{m}(H_{i_{n} +1})$ surrounds a pole. This pole is none other than $w$ by our assumption. Consequently
$H_{{i_n}+1+m}=H_{i_j}$ for some $j$. This $j$ can only be $1$, otherwise there will be at least $n+1$ Herman rings surrounding $w$ which is not true. Thus $H_{{i_n}+1+m}=H_{i_1}$ and consequently
$H_{{i_1}+1+m} =H_{{i_n}}$. Note that $p=2(m+1)$. Consider a non-contractible $f^{p}$-invariant Jordan curve
$\gamma_1$ in $H_{i_1}$. Then $\gamma_{m+1}=f^{m+1}(\gamma_1)$ is also a non-contractible Jordan curve in $H_{i_n}$ and $f^{m+1}(\gamma_{m+1})=\gamma_1$. The map $f^{m+1}$ takes the region bounded by
these two curves $\gamma_{m+1}$ and $\gamma_{1}$ conformally onto itself. This is not possible if
$n >1$ as this region intersects the Julia set. But by Lemma~\ref{even}, $n=1$ is not possible. This contradiction completes the proof.
\end{proof}
\begin{remark}
If a function belonging to $M_o$ has  more than one pole but only one single separated pole then it does not have any Herman ring.
\label{2sb-poles}\end{remark}
Now we present the proof of Theorem~\ref{oddperiodic}.
\begin{proof}[Proof of Theorem~\ref{oddperiodic}]
If there are only two $H$-maximal nests and  one of them consists of
only one ring then the other, we call it bigger, surrounds $O_f$ in
one of its innermost ring by Lemma~\ref{O_f in innermost}(4).
There are at most two $H-$relevant poles by Lemma~\ref{relevantpoles<maximalnests}.
It follows from Lemma~\ref{even} that the number of
$H-$relevant poles is at least two. Hence, there are exactly two $H-$relevant poles.
However, there cannot be two poles surrounded by a single Herman ring by Lemma~\ref{one-one}.
Therefore, there is a pole $w$ surrounded by the outermost ring of the bigger nest.
Suppose that this pole and $O_f$ are separated by at least two rings, say $H^1$ and $H^2$. None of these rings surrounds any pole which gives that neither $f(H^1)$ nor $f(H^2)$ surrounds $O_f$. Since each ring in the bigger nest surrounds $O_f$ it follows that $f(H^1)$ and $f(H^2)$ are in an $H-$maximal nest which is different from the bigger nest and has more than one ring. However this is not possible.
 Therefore, the pole $w$ and $O_f$ are separated by only one ring of
the bigger nest. All rings of the bigger nest except the innermost surrounds $w$ since the bigger nest is strictly nested. The single ring of the other nest also surrounds a pole making the total number of
rings surrounding some pole even. Thus there are odd number of rings in the cycle containing $H$ proving that $H$ is odd periodic.

\end{proof}

 \subsection{Doubly connected Fatou components}
    For proving Theorem~\ref{open}, we put together Theorems 1-5  of \cite{tkzheng-omitted} as a
    lemma. Let $c(U)$ denote the connectivity of a domain $U$. We say a Fatou
component $V$ is  SCH if one of the following holds.
\begin{enumerate}
 \item $V$ is simply connected.
\item $V$ is multiply connected with $c(V_n)>1$ for all $n \in \mathbb{N}$ and
 $V_{\bar{n}}$ is a Herman ring for some $\bar{n} \in
\mathbb{N} \bigcup \{0\}$.
\end{enumerate}
A singleton Julia component is called buried if it is not in the boundary of any Fatou component.
Let $M_o ^k=\{f \in M_o~:~f ~\mbox{has}~ k ~\mbox{omitted values}\}$
for $k=1,~2$.
\begin{lemma}
   Let $f \in M_o$.
   \begin{enumerate}
   \item Let  $\mathcal{J}(f) \bigcap O_f \neq \emptyset$. If
$f \in M_o^2$ or $f \in M_o^1$ with $ |\mathcal{J}_{O_f}|>1$, then
each Fatou component of $f$ is SCH.
   \item  Let the set $O_f$ intersect two distinct Fatou components
$U_1$ and $U_2$ of $f$. If both  $U_1$ and $U_2$ are unbounded, or
exactly one of them is unbounded and is simply connected then all
the Fatou components of $f$ are simply connected. Otherwise, each
Fatou component of $f$ is SCH.
\item Let $O_f$ be contained in a Fatou component $ U$ and
$V$ be a Fatou component with $V_n \neq U$ for any $n
   \geq 0$.
  \begin{enumerate}
  \item If $U$ is unbounded, then $c(V_n)=1$ for all $n
   \geq 0$.
  \item If $U$ is bounded, then $V$ is SCH.
  \item If $U$ is wandering, then $c(U_n)=1$ for all $n
   \geq 0$.
\item Let $U$ be pre-periodic but not periodic.
If $U$ is unbounded, then $c(U_n)=1$ for all $n
   \geq 0$. If $U$ is bounded, then $U$ is SCH.
\item If $U$ is periodic, then $c(U_n)=1$ or $\infty$ for all $n
   \geq 0$.

\end{enumerate}
   \item Let $f \in M_o^1$, $O_f=\{a\} \subset
\mathcal{J}(f)$
 and $|\mathcal{J}_{a}|=1$. If $\mathcal{J}_{a}$ is not a buried component of
the Julia set, then $f$ has an infinitely connected Baker domain $B$
with period $p >1$ and $a$ is a pre-pole. Further, for each multiply
connected Fatou component $U$ of $f$ not landing on any Herman ring,
there is a non-negative integer $n$ depending on $U$ such that
$U_n=B$. In this case, singleton buried components are dense in
$\mathcal{J}(f)$.
\item
Let $f \in M_o^1$, $O_f=\{a\} \subset \mathcal{J}(f)$ and
$|\mathcal{J}_{a}|=1$. If $\mathcal{J}_{a}$ is a buried component of
the Julia set, then all the multiply connected Fatou components not
landing on any Herman ring are wandering and $a$ is a limit point of
  $\{f^n\}_{n>0}$ on each of these wandering domains. Further, if $\mathcal{F}(f)$ has a  multiply connected
   wandering domain, then the forward orbit of $a$ is an infinite set and singleton buried components are dense
    in $\mathcal{J}(f)$.
   \end{enumerate}
\label{MultconnFC}

   \end{lemma}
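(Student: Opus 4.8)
The plan is to obtain each clause of the lemma directly from the corresponding statement among Theorems 1--5 of \cite{tkzheng-omitted}; the present lemma only reorganizes those results under the shorthand SCH. First I would fix the organizing principle, namely the location of the omitted-value set: whether $O_f$ meets $\mathcal{J}(f)$ or lies entirely in $\mathcal{F}(f)$, together with the finer data of the number of omitted values, the connectivity of the Julia component $\mathcal{J}_{O_f}$, and the boundedness and orbit type of any Fatou component carrying $O_f$. Each branch of the enumeration will correspond to exactly one such configuration.

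For part (1), where $O_f \subset \mathcal{J}(f)$ with either two omitted values or a single omitted value whose Julia component is nondegenerate, the assertion that every Fatou component is SCH is precisely the content of the matching theorem, once ``simply connected, or multiply connected and eventually landing on a Herman ring'' is contracted to SCH. Parts (2) and (3) handle the case $O_f \subset \mathcal{F}(f)$: part (2) when the two omitted values occupy distinct Fatou components, refined by boundedness and simple connectivity; part (3) when a single component $U$ carries $O_f$, refined according as $U$ is unbounded, bounded, wandering, preperiodic, or periodic. Each sub-case is read off from the relevant theorem. Parts (4) and (5) treat the remaining single-omitted-value case in which the Julia component $\mathcal{J}_a$ is a singleton, split by whether $\mathcal{J}_a$ is buried; these reproduce the two theorems describing, respectively, an infinitely connected Baker domain and multiply connected wandering domains.

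The only delicate point is the bookkeeping: one must verify that Theorems 1--5 together exhaust every configuration of $O_f$ relative to $\mathcal{F}(f)$ and $\mathcal{J}(f)$ with neither gap nor overlap, so that the disjunction in the lemma is complete and the hypotheses are mutually exclusive. Concretely, I would check that the trichotomy---$O_f \subset \mathcal{J}(f)$, or $O_f$ meeting two Fatou components, or $O_f$ lying in a single Fatou component---refined by the stated connectivity, boundedness, and orbit-type conditions, matches the hypotheses of the five theorems bijectively. Once this correspondence is confirmed no further argument is required, since each conclusion is then a verbatim restatement, up to the SCH abbreviation, of a result already proved in \cite{tkzheng-omitted}.
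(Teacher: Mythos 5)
Your proposal matches the paper exactly: the author gives no independent proof, stating only that the lemma ``put[s] together Theorems 1--5 of \cite{tkzheng-omitted}'' under the SCH abbreviation, which is precisely the citation-plus-bookkeeping argument you describe. No further comment is needed.
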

The above result describes the forward orbits of all multiply connected Fatou components in every possible situation. The proof basically depends on Lemma~\ref{general1}. The main idea of the proof is to analyse the forward orbits of non-contractible Jordan curves in multiply connected Fatou components.
%
\begin{proof}[Proof of Theorem~\ref{open}]
Let $D$ be a doubly connected periodic Fatou component of $f \in M_o$.
\par
 Let
$\mathcal{J}(f) \bigcap O_f \neq \emptyset$. If (1)$f \in M_o^2$,
(2) $f \in M_o^1$ with $ |\mathcal{J}_{O_f}|>1$, or (3) $f \in
M_o^1$, $O_f=\{a\} \subset \mathcal{J}(f)$, $|\mathcal{J}_{a}|=1$
and $\mathcal{J}_{a}$ is a buried component of the Julia set then
$D$ is a Herman ring by
Lemma~\ref{MultconnFC}(1) and (5). If $f \in M_o^1$, $O_f=\{a\}
\subset \mathcal{J}(f)$, $|\mathcal{J}_{a}|=1$ and $\mathcal{J}_{a}$
is not  a buried component of the Julia set then the Baker domain
(as mentioned in Lemma~\ref{MultconnFC}(4)) is infinitely connected.
Indeed, it can be shown  (see Lemma~4 and the proof of Theorem ~4 of \cite{tkzheng-omitted}) that all its forward images are also
infinitely connected. Thus in this
situation also, $D$ is a
Herman ring.
\par
Let $\mathcal{J}(f) \bigcap O_f = \emptyset$. If $O_f$ intersects
two Fatou components then we are done by Lemma~\ref{MultconnFC}(2).
Let $O_f \subset U$ for some Fatou component. Then, by
Lemma~\ref{MultconnFC}(3)(a-b), $D$ is a Herman ring whenever $D_n
\neq U$. If $D_n = U$ for some $n$ then $U$ must be periodic and by
Lemma~\ref{MultconnFC}(3)(e), $c(D_n)=1$ or $\infty$ for all $n$, which can not be possible as $D$ is periodic and doubly connected.
\end{proof}

\bibliographystyle{amsplain}

\end{document}